\newtheorem{thm}{Theorem}[section]
\newtheorem{cor}[thm]{Corollary}
\newtheorem{lem}[thm]{Lemma}
\newtheorem{rem}[thm]{Remark}
\newtheorem{eg}[thm]{Example}
\newcommand{\PU}{\textrm{PU}}
\newcommand{\HC}{{\textbf{H}_{\mathbb{C}}^2}}
\renewcommand{\Im}{\,{\operatorname{Im}\,}}
\renewcommand{\Re}{\,{\operatorname{Re}\,}}
\newcommand{\Z}{\mathbb{Z}}
\begin{document}
\title{ Notes on complex hyperbolic triangle groups of type $(m,n,\infty)$}
\author{Li-jie Sun}
\date{\today}
\thanks{AMS subject classification: Primary 51M10, Secondary 32M15, 53C55, 53C35}
\thanks{Keywords: complex hyperbolic triangle groups}
\maketitle
\begin{abstract}
In this paper we mainly pay attention to the complex hyperbolic triangle groups of type $(m,n,\infty)$ and discuss the discreteness. From the results more explicit conclusions about the triangle groups of type $(n,\infty,\infty)$ will also be given.
\end{abstract}

\section{ Introduction}
\par A complex hyperbolic triangle is a triple $(C_{1}, C_{2}, C_{3})$ of complex geodesics in $\HC$. If the complex geodesics $C_{k-1}$ and $C_{k}$ meet at the angle $\frac{\pi}{p}, \frac{\pi}{q}, \frac{\pi}{r}$ ($p, q, r \in \mathbb{Z}$), where the indices are taken mod 3, we call the triangle $(C_{1}, C_{2}, C_{3})$ a $(p, q, r)-triangle$. We call $\Gamma$ a $(p, q, r)-$\emph{triangle group}, if $\Gamma$ is generated by three complex reflections $I_{1}, I_{2}, I_{3}$ in the sides $C_{1},C_{2},C_{3}$ of a $(p,q,r)$-triangle. Throughout this paper we will use $C_{j}, I_{j}, \Gamma$ to denote the
complex geodesic, complex reflection, and the complex hyperbolic triangle group respectively, unless otherwise stated.
\par The deformation theory of complex hyperbolic triangle groups was begun in \cite{GP1}. Goldman and Parker investigated $\Gamma$ of type $(\infty,\infty,\infty)$ (complex
hyperbolic ideal triangle group) and gave the necessary and sufficient conditions for ideal triangle group $\Gamma$  to be discrete embedded. Especially the necessary condition for $\Gamma$ of type $(\infty,\infty,\infty)$ to be discretely embedded in $\PU(2,1)$ is that the product of the three generators $I_{1}I_{2}I_{3}$ is not elliptic. They conjectured the necessary condition is also sufficient. Since then there have been many findings. Schwartz in \cite{Sch1} proved this conjecture and also verified that such a group is non-discrete if $I_{1}I_{2}I_{3}$ is elliptic. Recently Parker, Wang and Xie in \cite{PWX} show that the group of type $(3,3,n)$ is discrete if and only if $I_{1}I_{3}I_{2}I_{3}$ is non-elliptic which is a conjecture in \cite{Sch2}. Parker \cite{Par3} explored groups of type $(n,n,n)$ such that $I_{1}I_{2}I_{3}$ is regular elliptic. In this case there are some discrete groups. In the same fashion as the proof due to Schwartz, Wyss-Gallifent proved the Schwartz's statement for $\Gamma$ of type $(n,n,\infty)$ in $\text{\cite[Lemma 3.4.0.19]{Wy}}$. Pratoussevitch improved the result of Wyss-Gallifent in \cite{Pra2}. Also Kamiya, Parker and Thompson identified the non-discrete classes of $(n,n,\infty)-$triangle groups using the result, see \cite{KPT2}. It is interesting to think about whether the same statement holds for other type complex hyperbolic triangle groups, such as type $(m,n,\infty)(m\neq n).$ In this paper, we give the affirmative answer by the similar argument.

\par This paper is arranged as follows. Section 2 mainly consists of background about complex hyperbolic space and its holomorphic isometry group. Then we give three necessary conditions for $(m,n,\infty)$ $\text{-}$ triangle groups to be discrete in Section 3. After that we consider non-discrete cases of $(n,\infty,\infty)$ $\text{-}$ triangle groups in Section 4.

\section{Preliminaries}
\par Let $\mathbb{C}^{2,1}$ denote the vector space $\mathbb{C}^{3}$ equipped with the Hermitian form
$$\langle z,w \rangle=z_{1}\overline{w}_{1}+z_{2}\overline{w}_{2}-z_{3}\overline{w}_{3}$$
of signature (2,1). We denote by $\mathbb{CP}^{2}$ the complex projectivisation of $\mathbb{C}^{2,1}$ and by $\mathbb{P}:$ $\mathbb{C}^{2,1}\setminus \{0\}\rightarrow\mathbb{CP}^{2}$ a natural projectivisation map.  We call a vector $z\in \mathbb{C}^{2,1}$ \textit{negative, null, or positive}, according as $\langle z,z \rangle$ is negative, zero, or positive respectively. The \emph{complex hyperbolic 2-space} $\HC$ is defined as the complex projectivisition of the set of negative vectors in $\mathbb{C}^{2,1}$. It is called the standard projective model of complex hyperbolic space. Its boundary $\partial \HC$ is defined as the complex projectivisation of the set of null vectors in $\mathbb{C}^{2,1}$. This will also form the \emph{unit ball model} whose boundary is the sphere $\mathbb{S}^3$.
\par The complex hyperbolic plane $\HC$ is a $K\ddot{a}hler$ manifold of constant holomorphic sectional curvature -1. The holomorphic isometry group of $\HC$ is the projectivisation $\textrm{PU}(2,1)$ of the group $\textrm{U}(2,1)$ of complex linear transformation on $\mathbb{C}^{2,1}$, which preserves the Hermitian form.
\par Let $x,y\in \HC$ be points corresponding to vectors $\tilde{x},\tilde{y}\in \mathbb{C}^{2,1}$. Then the \textit{Bergman metric} $\rho$ on $\HC$ is given by
$$\cosh^2\Big(\frac{\rho(x,y)}{2}\Big)=\frac{\langle \tilde{x},\tilde{y}\rangle\langle \tilde{y},\tilde{x}\rangle}{\langle \tilde{x},\tilde{x}\rangle\langle \tilde{y},\tilde{y}\rangle}.$$

\par It will be convenient for us to choose a  particular model of the complex hyperbolic space which is adapted for our requirements; namely, one with the distinguished point $q_{\infty}$ on the boundary and a set of coordinates with respect to this point. This set-up is generalised by the the Siegel domain model $\mathfrak{S}$ of $\HC$ with horospherical coordinates, see \cite{GP2}. In these coordinates $z\in\mathfrak{S}$ is given by $z=(\xi,v,u)\in\mathbb{C}\times\mathbb{R}\times\mathbb{R}_{+}.$ Similarly, points in $\partial\HC=\mathbb{C}\times\mathbb{R}\cup\{q_\infty\}$ are either $z=(\xi,v,0)\in\mathbb{C}\times\mathbb{R}\times\{0\}$ or the point at infinity $q_{\infty}.$
There is unique complex projective hyperplane $\textrm{H}_{\infty}\subset\mathbb{CP}^{2}$ that is tangent to $\partial\HC$ at $q_{\infty}.$ Using affine coordinates on $\mathbb{CP}^{2}-\textrm{H}_{\infty}$ complex hyperbolic space is realised as a \emph{Siegel domain.}
\par The 3-dimensional \emph{Heisenberg group} $\mathfrak{N}$ is the set $\mathbb{C}\times\mathbb{R}$ with the group law
$$(\xi_{1},v_{1})\lozenge(\xi_{2},v_{2})=(\xi_{1}+\xi_{2},v_{1}+v_{2}+2\Im(\xi_{1}\overline{\xi_{2}})).$$
The inverse of $(\xi_{1},v_{1})$ is
$$(\xi_{1},v_{1})^{-1}=(-\xi_{1},-v_{1}).$$
\par The boundary of the half-space model of real hyperbolic geometry is identified with the one-point compactification of Euclidean space. In the same way, the boundary of the Siegel domain may be identified with the one-point compactification of the Heisenberg group. In order to see how $\mathfrak{S}$ relates to the standard projective model of $\textbf{H}_{\mathbb{C}}^{2}$ we define the map $\psi:$ $\overline{\mathfrak{S}}\longrightarrow \mathbb{CP}^2$ by

\begin{displaymath}
\psi: (\xi, v, u)\longmapsto
\left[ \begin{array}{ccc}
\xi\\
\frac{1}{2}(1-|\xi|^2-u+i v)\\
\frac{1}{2}(1+|\xi|^2+u-i v)
\end{array} \right]\quad \textmd{for}\quad(\xi, v, u)\in  \overline{\mathfrak{S}}-\{q_{\infty}\},
\end{displaymath}

and $\psi(q_{\infty})=[0,-1,1]^{t}.$
\par The Heisenberg norm is given by
$$|(\xi,v)|=\big||\xi|^2-iv\big|^{\frac{1}{2}}.$$
This gives rise to a metric, the \textit{Cygan metric} $\rho_{0}$ on the Heisenberg group $\mathfrak{N}$ by
$$\rho_{0}((\xi_{1},v_{1}),(\xi_{2},v_{2}))=\big|(\xi_{1},v_{1})^{-1}\lozenge(\xi_{2},v_{2})\big|=\big||\xi_{1}-\xi_{2}|^2-iv_{1}+iv_{2}-2i\Im(\xi_{1}\overline{\xi_{2}})\big|^{\frac{1}{2}}.$$
We can extend the Cygan metric to $\overline{\HC}-\{q_{\infty}\}$ as follows (\cite{Par2})
\[
\rho_{0}((\xi_{1},v_{1},u_{1}),(\xi_{2},v_{2},u_{2}))=\big||\xi_{1}-\xi_{2}|^2+|u_{1}-u_{2}|-iv_{1}+iv_{2}-2i\Im(\xi_{1}\overline{\xi_{2}})\big|^{\frac{1}{2}}.
\]

\par  A \emph{complex geodesic} is a complex projectivisation of a two dimensional complex subspace of $\mathbb{C}^{2,1}$. Given two points $x$ and $y$ in $\HC\cup \partial \HC$, lifting $x$ and $y$ to $\tilde{x}$ and $\tilde{y}$ in $\mathbb{C}^{2,1}$ respectively, and then taking $\widetilde{C}$ to be the complex span of $\tilde{x}$ and $\tilde{y}$. We define the complex geodesic $C$ to be the projectivisation of $\widetilde{C}$, which can be uniquely determined by a positive vector $p\in\mathbb{C}^{2,1},$ i.e. $C=\pi(\{z\in\mathbb{C}^{2,1}|\langle z, p\rangle=0\})$. We call $p$ a \emph{polar vector} to $C.$

\par Recall that a chain is the intersection of a complex geodesic with $\partial{\textbf{H}}_{\mathbb{C}}^2$. For $z$ $\in$ $\mathbb{C}$, the \textit{z-chain} is the chain having polar vector $(1,-\overline{z},\overline{z})^{t}.$ The $z$-chain is the vertical chain in $\mathfrak{N}$ through the point $(z,0)$. For $z,r$ $\in$ $\mathbb{R}$, the $(z, r)$-$chain$ is the chain having polar vector $(0,1+r^2+iz,1-r^2-iz)^{t}.$ The $(z, r)$-chain is the circle with radius $r$ centered at the origin in $\mathbb{C}\times\{z\}\subset \mathfrak{N}$. One can see more details in \S4.3 of \cite{Gol}. It is straightforward to show that the only chains through $\infty$ are vertical. Other chains are various ellipses (perhaps circles) which project to circle via $\mathbb{C}\times\mathbb{R}\rightarrow\mathbb{C}$. Specifically, the unit circle in $\mathbb{C}\times\{0\}$ and vertical lines (with the infinite point) are all chains.

\par The involution (complex reflection of order 2) in $C$ is represented by an element $I_{C}\in \textrm{SU}(2,1)$ that is given by
$$I_{C}=-z+2\frac{\langle z, p\rangle}{\langle p, p\rangle}p,$$ where $p$ is a polar vector of $C$. There is a one-to-one correspondence between complex geodesics and chains, therefore we also say $I_{C}$ is the involution on $
\partial{C}$.

\par An automorphism $g$ of $\HC$ lifts to a unitary transformation $\tilde{g}$ of $\mathbb{C}^{2,1}$ and the fixed points of $g$ on $\mathbb{P}(\mathbb{C}^{2,1})$ correspond to eigenvalues of $\tilde{g}$. An automorphism $g$ is $elliptic$ if it fixes at least one point in $\HC$, $parabolic$ if it has a unique fixed point on $\partial \HC$, and $loxodromic$ if it fixes a unique pair of points on $\partial \HC$. An elliptic element $g$ is called $regular$ $elliptic$ if its eigenvalues are pairwise distinct. Otherwise we call it \textit{boundary elliptic}, in which case the element has a multiple eigenvalue with a two dimensional eigenspace.

\par Define the discriminant polynomial
\begin{equation}\label{dis}
f(z)=|z|^4-8\Re(z^3)+18|z|^2-27.
\end{equation}
From $\text{\cite[Theorem 6.2.4]{Gol}}$, we know an element $g\in \textrm{SU(2,1)}$ is regular elliptic if and only if $f(\tau(g))<0,$ where $\tau(g)$ is the trace of $g$.

\par See \cite{Gol} for more details about complex hyperbolic space.

\section{Complex hyperbolic triangle group of type $(m,n,\infty)$}
\par By conjugation in $\textrm{PU}(2,1)$, we can take three involutions $I_{j}$ in
$C_{j}$ such that $\partial C_{1},~\partial C_{2},~\partial C_{3}$ are (0,1)-chain, ${z_{1}}$-chain, ${z_{2}}$-chain respectively, where
$z_{1}= \cos(\pi/n)$, $z_{2}= e^{i\theta}\cos(\pi/m)$ according to $\text{\cite[Lemma 3.1.0.7]{Wy}}$. Then the three
polar vectors correspondingly are
\begin{displaymath}
{p_{1}} =
\left[ \begin{array}{ccc}
0\\
1 \\
0
\end{array} \right],\quad
{p_{2}} =
\left[ \begin{array}{ccc}
1 \\
-z_{1} \\
z_{1}
\end{array} \right],\quad
{p_{3}} =
\left[ \begin{array}{ccc}
1\\
-\overline z_{2}\\
\overline z_{2}
\end{array} \right].
\end{displaymath}

It is easy to obtain the three vertices

\begin{displaymath}
{\tilde{u}_{1}} =
\left[ \begin{array}{ccc}
0\\
1 \\
-1
\end{array} \right],\quad
{\tilde{u}_{2}} =
\left[ \begin{array}{ccc}
z_{2} \\
0 \\
1
\end{array} \right],\quad
{\tilde{u}_{3}} =
\left[ \begin{array}{ccc}
\overline z_{1}\\
0\\
1
\end{array} \right].
\end{displaymath}
The involutions on the complex chains $\partial C_{1}, \partial C_{2}, \partial C_{3}$ are respectively as follows
\begin{displaymath}
{I_{1}} =
\left[ \begin{array}{ccc}
-1&\quad 0 \quad& 0 \\
0 & 1 & 0 \\
0 & 0 & -1
\end{array} \right],\quad
{I_2} =
\left[ \begin{array}{ccc}
1 &\quad -2s_{1} \quad& -2s_{1} \\
-2s_{1} & 2s_{1}^2-1 & 2s_{1}^2 \\
2s_{1} & -2s_{1}^2 & -2s_{1}^2-1
\end{array} \right],
\end{displaymath}
\begin{displaymath}
{I_3} =
\left[ \begin{array}{ccc}
1 &\quad -2s_{2}e^{i\theta} \quad& -2s_{2}e^{i\theta} \\
-2s_{2}e^{-i\theta} & 2s_{2}^2-1 & 2s_{2}^2 \\
2s_{2}e^{-i\theta} & -2s_{2}^2 & -2s_{2}^2-1
\end{array} \right],
\end{displaymath}
where $s_{1}= \cos(\pi/n)$, $s_{2}= \cos(\pi/m)$.
Define the parameter of the $(m,n,\infty)-$triangle \textit{angular invariant} $\alpha$ by
$$\alpha=\arg\left(\prod\limits_{i=1}^3\langle p_{i-1}, p_{i+1}\rangle\right)=\arg(z_{1}z_{2})=\theta.$$
\begin{rem}
1. For complex hyperbolic triangle group $\Gamma$ of type $(n,n,\infty)$, we can take the following three polar vectors
\begin{displaymath}
{p_{1}} =
\left[ \begin{array}{ccc}
0\\
1 \\
0
\end{array} \right],\quad
{p_{2}} =
\left[ \begin{array}{ccc}
1 \\
-\overline z \\
\overline z
\end{array} \right],\quad
{p_{3}} =
\left[ \begin{array}{ccc}
1\\
z\\
-z
\end{array} \right],
\end{displaymath}
where $z=\cos (\frac {\pi}{n})e^{i\theta_{0}}.$ These three normalised polar vectors were also used in \cite{Kam3}, \cite{KPT2}.
\\2. One can compare our parameter $\alpha$ of the space of complex hyperbolic triangles with another parameter $t$ by Wyss-Gallifent \cite{Wy} and shall obtain
\begin{equation}\label{par}
\cos\alpha=\frac{t^2-1}{t^2+1}.
\end{equation}

\end{rem}
\par Now we give the affirmative answer about the Schwartz's statement $\text{\cite[Section 3.3]{Sch1}}$ for the $(m,n,\infty)-$ triangle groups.
\begin{thm}\label{thm3.1}
The complex triangle group $\Gamma$ of type $(m,n,\infty)$ is not discrete if $I_{1}I_{2}I_{3}$ is regular elliptic.
\end{thm}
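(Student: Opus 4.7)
The plan is to follow Schwartz's strategy \cite{Sch1}, as adapted by Wyss-Gallifent for the $(n,n,\infty)$ case in \cite{Wy}. The key structural observation is that in the coordinates fixed in Section 3, both chains $\partial C_2$ and $\partial C_3$ are vertical chains through $q_\infty$; hence $I_2$ and $I_3$ both fix $q_\infty$, and consequently $W := I_2 I_3$ is a parabolic isometry fixing $q_\infty$ that acts on the Heisenberg boundary as a translation by an explicit vector computable from the matrices of $I_2$, $I_3$ written above. This parabolic element, together with the regular elliptic $A := I_1 I_2 I_3$, will supply the machinery to violate discreteness of $\Gamma$.

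Regular ellipticity of $A$, via the discriminant polynomial (\ref{dis}), means $f(\tau(A))<0$ and the three eigenvalues $e^{i\phi_1}, e^{i\phi_2}, e^{i\phi_3}$ of $A$ are pairwise distinct points of the unit circle. In particular $A$ fixes no point of $\partial\HC$, so $A^n q_\infty \neq q_\infty$ whenever $A^n \neq I$, and the closure $\overline{\langle A\rangle}$ in $\PU(2,1)$ is a compact abelian subgroup (finite cyclic, or a torus of dimension one or two).

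The heart of the proof is to manufacture a sequence $g_k \in \Gamma \setminus \{I\}$ with $g_k \to I$. When $A$ has infinite order, the iterates $\{A^n\}_{n \in \mathbb{Z}}$ form an infinite subset of the compact set $\overline{\langle A\rangle}$, hence accumulate, and one reads off indices $n_k \neq m_k$ with $A^{n_k} A^{-m_k} \to I$, immediately contradicting discreteness. In the residual finite-order case, I would follow a Schwartz-type commutator construction using the parabolic $W$: choose iterates $A^{n_k}$ converging to a nonidentity $B \in \overline{\langle A\rangle}$, form words such as $g_k := A^{n_k} W^{\ell_k} A^{-n_k} W^{-\ell_k}$, and tune $(n_k, \ell_k)$ so that the Heisenberg translation performed by $A^{n_k} W^{\ell_k} A^{-n_k}$ nearly cancels that of $W^{\ell_k}$. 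Because $A$ has no fixed point on $\partial\HC$, the subgroups $\langle W \rangle$ and $A\langle W \rangle A^{-1}$ are distinct parabolic subgroups with different fixed points on the boundary, so the $g_k$ are nontrivial.

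The main technical obstacle is the explicit Cygan-coordinate bookkeeping in the finite-order subcase: one must track how conjugation by the elliptic $A^{n_k}$ transforms the Heisenberg-translation parameter of $W^{\ell_k}$, and use compactness of $\overline{\langle A\rangle}$ together with the continuous dependence of that parameter on the conjugating element to match it, within arbitrarily small error, to a power of $W^{-1}$. The explicit matrices for $I_1, I_2, I_3$ and the Heisenberg group law make this computation routine once the limit $B$ is identified, and regularity of $A$ (three distinct eigenvalues) prevents degenerate cancellation in $g_k$. Once such a sequence of nontrivial elements tending to $I$ is assembled, discreteness of $\Gamma$ is violated.
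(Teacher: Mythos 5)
Your treatment of the infinite-order case is fine and coincides with the paper's (one line: an infinite-order elliptic element generates a non-discrete cyclic group, since it lies in a compact stabiliser). The entire content of the theorem, however, is the finite-order case, and there your proposal has a genuine gap. You propose to build nontrivial words $g_k=A^{n_k}W^{\ell_k}A^{-n_k}W^{-\ell_k}$ tending to the identity by ``tuning'' so that ``the Heisenberg translation performed by $A^{n_k}W^{\ell_k}A^{-n_k}$ nearly cancels that of $W^{\ell_k}$.'' But $A=I_1I_2I_3$ is regular elliptic, hence fixes no point of $\partial\HC$, so $A^{n}q_\infty\neq q_\infty$ and $A^{n}W^{\ell}A^{-n}$ is a parabolic fixing $A^{n}q_\infty$, not a Heisenberg translation; there is no translation parameter to cancel against that of $W^{-\ell_k}$. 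As $\ell_k$ grows both factors move points of the boundary by large Cygan distances about different centres, and no mechanism you describe forces the product toward $I$. More fundamentally, a group generated by a finite-order elliptic and a parabolic can perfectly well be discrete, so any argument of this shape must exploit the specific arithmetic of the triangle-group configuration --- which your construction never does.

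The paper takes a completely different route for this case: it shows that $I_1I_2I_3$ \emph{cannot} be regular elliptic of finite order at all, by Pratoussevitch's Galois-theoretic trace argument. One writes $\tau=\mathrm{tr}(I_1I_2I_3)=\omega_l^{k_1}+\omega_l^{k_2}+\omega_l^{k_3}$ with $k_1+k_2+k_3=0$, and uses the explicit relation $\tau=8s_1s_2e^{i\alpha}-(4(s_1^2+s_2^2)+1)$ to show that every Galois conjugate $\sigma_k(\tau)$ lies on a circle centred at $-(4(s_1'^2+s_2'^2)+1)$ of radius $|8s_1's_2'|$, hence has real part strictly less than $-1$ (here $m\neq n$ gives $s_1'\neq s_2'$, making the inequality strict). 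Summing over all Galois automorphisms yields
$$\frac{1}{\varphi(d_1)}+\frac{1}{\varphi(d_2)}+\frac{1}{\varphi(d_3)}>1,$$
where $d_i=l/(k_i,l)$, and one then checks (following Pratoussevitch) that no admissible $l,k_1,k_2,k_3$ satisfy this, a contradiction. If you want to salvage a dynamical proof along Schwartz's lines you would need an entirely different engine in the finite-order case; as written, the key step of your argument does not go through.
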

\begin{proof} If the element $I_{1}I_{2}I_{3}$ is of infinite order, then the cyclic group generated by this element is certainly not discrete. Hence it suffices for us to prove that $I_{1}I_{2}I_{3}$ can not be regular elliptic of finite order. We will only consider the result when $m\neq n$ using the similar method given by Pratoussevitch (see \cite{Pra2}), in which the author proved the case for $m= n$.
\par Assume that the element $I_{1}I_{2}I_{3}$ is regular elliptic of finite order. Without loss of generality, we can write
\begin{equation}\label{tau.0}
\tau=\textmd{tr}(I_{1}I_{2}I_{3})=\omega_{l}^{k_{1}}+\omega_{l}^{k_{2}}+\omega_{l}^{k_{3}},
\end{equation}
for some integers $k_{1},k_{2},k_{3}$, with $k_{1}+k_{2}+k_{3}=0$. Here $\omega_{l}=e^{2\pi i/l}$ and a positive integer $l$ is taken as small as possible.
\par Let $N$ be the least common multiple of $l$, $2m$ and $2n.$ Let $k$ be relatively prime to $N$, $\sigma_{k}$ be the Galois automorphism of $\mathbb{Q}[\omega_{N}]$ given by $\sigma_{k}(\omega_{N})=\omega_{N}^{k}.$ Obviously $\sigma_{k}(t)=t,$ for $t\in\mathbb{N.}$

\begin{lem}\label{lem:3.2}
$\textmd{Re}(\sigma_{k}(\tau))<-1$.
\end{lem}
\begin{proof}
From the explicit form of three involutions $I_{1},I_{2},I_{3}$, we can rewrite the trace of $I_{1}I_{2}I_{3}$ as $\tau=8s_{1}s_{2}e^{i\alpha}-(4(s_{1}^2+s_{2}^2)+1)$, i.e.
\begin{equation}\label{tau}
|\tau+4(s_{1}^2+s_{2}^2)+1)|=(8s_{1}s_{2})^2.
\end{equation}

\par By considering (\ref{tau.0}) the expression of $\tau=\omega_{l}^{k_{1}}+\omega_{l}^{k_{2}}+\omega_{l}^{k_{3}},$ we know
$$\tau\in\mathbb{Q}[\omega_{l}]\subseteq \mathbb{Q}[\omega_{N}],$$
$$2s_{1}=2\textmd{cos}(\pi/n)=\omega_{2n}+\overline{\omega}_{2n}\in\mathbb{Q}[\omega_{N}].$$
Similarly, $2s_{2}\in\mathbb{Q}[\omega_{N}].$ Let $s_{j}'=\sigma_{k}(s_{j})$ for $j=1,2$ ($s_{j}'$ could be equal to $s_{j}$).
Then the equation (\ref{tau}) implies that
$$(\sigma_{k}(\tau)+4(s_{1}^2+s_{2}'^2)+1)(\sigma_{k}(\overline{\tau})+4(s_{1}'^2+s_{2}'^2)+1)=(8s_{1}'s_{2}')^2.$$

Since $\sigma_{k}$ commutes with complex conjugation, we know $s_{j}'\in\mathbb{R}$ and
$$|\sigma_{k}(\tau)+4(s_{1}'^2+s_{2}'^2)+1|=|8s_{1}'s_{2}'|.$$
It follows that $\sigma_{k}(\tau)$ lies on the circle with center at $-(4(s_{1}'^2+s_{2}'^2)+1)$ and radius $|8s_{1}'s_{2}'|$. It is easy to compute
\begin{align*}
-(4(s_{1}'^2+s_{2}'^2)+1)+|8s_{1}'s_{2}'|&= -4(s_{1}'^2-2|s_{1}'s_{2}'|+s_{2}'^2)-1\\
&= -4(s_{1}'\pm s_{2}')^2-1\\
&< -1.
\end{align*}
The last strict inequality is from $s_{1}'\neq s_{2}'$, because $s_{1}\neq s_{2}$ for $m\neq n$.
Hence $\Re(\sigma_{k}(\tau))<-1.$
\end{proof}

Note that The following lemma is essentially Lemma 2 of \cite{Pra2}. We clarified it here again by taking different values for $k$ from \cite{Par2}.

\begin{lem}\label{lem:3.3}
For $i\in\{1,2,3\}$ let
$d_{i}=\frac{l}{(k_{i},l)},$
where $(k_{i},l)$ is the greatest common divisor of $k_{i}$ and $l$. Then
\begin{equation}\label{phi}
\frac{1}{\varphi(d_{1})}+\frac{1}{\varphi(d_{2})}+\frac{1}{\varphi(d_{3})}>1.
\end{equation}
Here $\varphi$ is the Euler phi function.
\end{lem}
\begin{proof}
Let $S(N)=\{k\in\mathbb{Z}\mid 1\leq k<N$ and $(k,N)=1\}$ (note that one can also have the similar definition for $S(d_{i})$).
It follows from Lemma \ref{lem:3.2} that
\begin{equation}\label{1}
\Re\left(\sum_{
\begin{subarray}{c}
k\in S(N)
\end{subarray}
}\sigma_{k}(\tau)\right)<-\varphi(N).
\end{equation}

By assuming $N=l\cdot l'$, we obtain $d_{i}=\frac{l}{(k_{i},l)}=\frac{N}{(k_{i}l',N)}.$ Note that the root of unity $\omega_{l}^{k_{i}}=\omega_{N}^{l'k_{i}}$ is a primitive $d_{i}$th root of unity and
\[\sum_{
\begin{subarray}{c}
k\in S(d_{i})
\end{subarray}
}\omega_{d}^k \in \{-1,0,1\}.\]
 The map $(\Z/N\Z)^{\times }\rightarrow(\Z/d_{i}\Z)^{\times}$ induced by
$\Z/N\Z\rightarrow\Z/d_{i}\Z$ is surjective and with multiplicity $\varphi(N)/\varphi(d_{i}).$ Therefore we obtain the inequality
\begin{equation}\label{2}
\left|
\sum_{\begin{subarray}{l}
k\in S(N)
\end{subarray}
}
\sigma_{k}(\omega_{N}^{l'k_{i}})\right|
\leq\frac{\varphi(N)}{\varphi(d_{i})},
\end{equation}
for $i\in\{1,2,3\}.$
Combing (\ref{1}) and (\ref{2}) we get
\begin{align*}
\varphi(N) & <\left|
\sum_{\begin{subarray}{l}
k\in S(N)
\end{subarray}
}
\sigma_{k}(\tau)\right|\\
& = \left|
\sum_{\begin{subarray}{l}
k\in S(N)
\end{subarray}
}
\sigma_{k}(\omega_{N}^{l'k_{1}}+\omega_{N}^{l'k_{2}}+\omega_{N}^{l'k_{3}})\right|\\
& \leq\left(\frac{1}{\varphi(d_{1})}+\frac{1}{\varphi(d_{2})}+\frac{1}{\varphi(d_{3})}\right)\cdot\varphi(N).\\
\end{align*}
Then the result can be obtained immediately.
\end{proof}
 Using the previous lemma, we could totally follow the statement due to \cite{Pra2} to show that there do not exist appropriate values for $l, k_{1}, k_{2}$ and $k_{3}$ such that (\ref{phi}) holds, i.e.
  $I_{1}I_{2}I_{3}$ can not be regular elliptic of finite order. So $\Gamma$ of type $(m,n,\infty)$ is not discrete when $I_{1}I_{2}I_{3}$ is regular elliptic.
\end{proof}
\par Applying this theorem, we can work out some conditions on $\cos \theta$ for $\Gamma$ with angular invariant $\theta$ of type $(m,n,\infty)$ to be non-discrete. A simple calculation yields that $$\tau=-5-2 \cos(2\pi/m)-2 \cos(2\pi/n)+8e^{i\theta} \cos(\pi/m) \cos(\pi/n)$$ by seeing (\ref{tau}).
Consequently we can obtain the interval of $a=\cos \theta$ $(-1\leq a\leq1)$ corresponding to the non-discrete $\Gamma$ by using the discriminant function (\ref{dis}). In the remaining content $\theta$ is the angular invariant of complex triangle group $\Gamma$ of type $(m,n,\infty),$ unless otherwise stated.
\par We give an example for $m=8$ showing that $\Gamma$ of type $(8,n,\infty)$ is non-discrete if $a\in(a_{n},$ $b_{n}).$ Note that there are no solutions for $a$ when $n\leq10.$
\begin{center}
\begin{table}[h]
\caption{Approximate values of $a_{n},$ $b_{n}$.}
\begin{tabular}{|c|c|c|}
\hline
n & $a_{n}$ & $b_{n}$\\
\hline
11&0.93067&0.93114\\
12&0.93226&0.93268\\
13&0.93318&0.93377\\
14&0.93386&0.93454\\
15&0.93437&0.93512\\
20&0.93575&0.93654\\
30&0.93662&0.93733\\
40&0.93690&0.93757\\
100&0.93719&0.93780\\
200&0.93723&0.93783\\
\hline
\end{tabular}
\end{table}
\end{center}

\par In the following we will use other different ways to find sufficient conditions on $a$ for $\Gamma$ to be non-discrete. Let $g\in \textrm{PU}(2,1)$ be a parabolic element. Define the \emph{translation length} $t_{g}(z)$ of $g$ at $z\in\mathfrak{N}$ by $t_{g}(z)=\rho_{0}(g(z),z).$ For the following discussion, it is necessary to give the explicit form of Ford isometric spheres. Let $h=(a_{ij})_{1\leq i,j \leq 3}$ be an element of $\textrm{PU}(2,1)$ not fixing $\infty$ (let the null vector $\infty$ represent the point $q_{\infty}$ at $\partial\HC$). The isometry sphere of $h$ is the sphere in the Cygan metric with center at $h^{-1}(\infty)$ and radius
$$r_{h}=\sqrt{\frac{2}{|a_{22}-a_{23}+a_{32}-a_{33}|.}}$$
(see \cite{Par1}). Now let's recall the complex hyperbolic versions of $\textmd{J\o rgensen's}$ inequality and Shimizu's lemma.
\begin{lem}{\cite[Lemma 2.2]{KPT1}}\label{lem:3.3}
Let $\textit{A} \in \textrm{SU}(2,1)$  be a regular elliptic element of order $n\geq7$ that preserves a Lagrangian plane (i.e. tr(\textit{A}) is real). Suppose that $A$ fixes a point z $\in \mathbb{H}^2_{\mathbb{C}}$. Let $B$ be any element of $\textrm{SU}(2,1)$ with $B(z)\neq z$. If
 $$\cosh\Big(\frac{\rho(Bz,z)}{2}\Big)\sin\Big(\frac{\pi}{n}\Big)<\frac{1}{2},$$
then $\langle A, B\rangle$ is not discrete and consequently any group containing $A$ and $B$ is not discrete.
\end{lem}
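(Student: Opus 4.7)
The plan is to argue by contradiction via a complex hyperbolic analogue of the Shimizu--J\o rgensen inequality for regular elliptic elements. Assuming $\langle A,B\rangle$ is discrete, I will build an infinite sequence of conjugates of $A$ in $\langle A,B\rangle$ whose fixed points tend to $z$, and show that this sequence cannot stabilise without forcing $B(z)=z$, which is the desired contradiction.

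First, set $B_{0}=B$ and define inductively
\[
B_{k+1}=B_{k}\,A\,B_{k}^{-1},\qquad k\ge 0.
\]
Each $B_{k}$ lies in $\langle A,B\rangle$ and, being conjugate to $A$, is itself a regular elliptic of order $n$ with real trace preserving the image of a Lagrangian plane. Setting $z_{k+1}:=B_{k}(z)$ with $z_{1}=B(z)\neq z$, a short induction shows $B_{k+1}$ fixes $z_{k+1}$, so the problem reduces to controlling the sequence of distances $\rho(z_{k},z)$.

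The crucial input is a displacement lemma for such $A$: for every $w\in\HC$,
\[
\sinh\!\Big(\tfrac{\rho(Aw,w)}{2}\Big)\;\le\;\sin\!\Big(\tfrac{\pi}{n}\Big)\,\sinh\!\bigl(\rho(z,w)\bigr),
\]
which on the Lagrangian plane is a direct consequence of the two-dimensional hyperbolic law of cosines applied to a rotation of angle $2\pi/n$, and extends to all of $\HC$ because the Lagrangian-plane and real-trace hypotheses pin the rotation data of $A$ to the conjugate pair $(\pi/n,-\pi/n)$. Applying the estimate to $B_{k}$ (which shares this rotation data) at $w=z$, and using $\sinh x=2\sinh(x/2)\cosh(x/2)$, one obtains the recursion
\[
\sinh\!\Big(\tfrac{\rho(z_{k+1},z)}{2}\Big)\;\le\;2\sin\!\Big(\tfrac{\pi}{n}\Big)\cosh\!\Big(\tfrac{\rho(z_{k},z)}{2}\Big)\,\sinh\!\Big(\tfrac{\rho(z_{k},z)}{2}\Big).
\]
The hypothesis $\cosh(\rho(Bz,z)/2)\sin(\pi/n)<\tfrac{1}{2}$ precisely states that the multiplier at $k=0$ is strictly less than $1$, and since $\rho(z_{k},z)$ then decreases monotonically, the recursion gives a genuine geometric contraction, whence $z_{k}\to z$.

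It remains to extract the contradiction. All $B_{k}$ are elliptic of the same finite order $n$, and their fixed points $z_{k}$ accumulate at $z$; a standard accumulation argument in the discrete group $\langle A,B\rangle\subset\textrm{PU}(2,1)$ forces the sequence $\{B_{k}\}$ to be eventually constant, i.e.\ $B_{k+1}=B_{k}$ for all large $k$, which means $B_{k}$ commutes with $A$ and hence preserves the fixed point $z$ of $A$. Unwinding the recursion back to $B_{0}=B$ then forces $B(z)=z$, contradicting the assumption. The main obstacle is the displacement estimate above: transporting the planar formula for $\sinh(\rho(Aw,w)/2)$ off the Lagrangian plane and onto all of $\HC$ uses both hypotheses (Lagrangian invariance and reality of the trace) in an essential way, and it is this estimate, together with the constraint $n\ge 7$ that keeps $2\sin(\pi/n)<1$ in the small-displacement regime, that produces the sharp threshold $\tfrac{1}{2}$ in the statement.
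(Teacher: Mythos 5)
This lemma is not proved in the paper at all: it is quoted verbatim from \cite[Lemma 2.2]{KPT1} (which in turn rests on the J\o rgensen-type displacement estimates of Jiang--Kamiya--Parker), so there is no in-paper argument to measure you against. Your scheme --- the conjugate iteration $B_{k+1}=B_kAB_k^{-1}$, the fixed points $z_{k+1}=B_k(z)$, the recursion $\sinh(\rho(z_{k+1},z)/2)\le 2\sin(\pi/n)\cosh(\rho(z_k,z)/2)\sinh(\rho(z_k,z)/2)$, and the observation that the hypothesis says exactly that the initial multiplier is below $1$ while $n\ge 7$ keeps $2\sin(\pi/n)<1$ --- is the classical mechanism behind the cited result, and the contraction step is set up correctly.

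Two places where the real content lives are, however, only asserted. First, the displacement estimate $\sinh(\rho(Aw,w)/2)\le\sin(\pi/n)\sinh(\rho(z,w))$ is the technical heart of the lemma and you do not prove it; worse, ``regular elliptic of order $n$ with real trace'' only pins the eigenvalues of $A$ to $\{1,e^{2\pi ik/n},e^{-2\pi ik/n}\}$ for \emph{some} $k$ coprime to $n$, not to $k=1$, and for $k>1$ the natural bound carries $\sin(\pi k/n)\ge\sin(\pi/n)$, which destroys the contraction. You must either justify why the rotation data is $\pm 2\pi/n$ or note that the constant is calibrated to the intended application ($A=I_1I_2$ with $C_1,C_2$ meeting at angle $\pi/n$). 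Second, the endgame is off: $B_{k+1}=B_k$ is equivalent to $B_k=A$, not to ``$B_k$ commutes with $A$,'' and the clean closing argument is different. Discreteness together with the fact that the $B_k$ are elliptic of fixed order $n$ with fixed points in a compact subset of $\HC$ makes $\{B_k\}$ finite, so the quantity $\rho(B_kz,z)$, which tends to $0$, takes finitely many values and must vanish for all large $k$; then, because a \emph{regular} elliptic has a unique fixed point in $\HC$, $B_k(z)=z$ together with $B_k(z_k)=z_k$ forces $z_k=z$, and downward induction yields $z_1=B(z)=z$, the desired contradiction. Without the uniqueness-of-fixed-point observation your ``unwinding the recursion back to $B_0$'' does not go through.
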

\begin{lem}{\cite[Theorem 2.1]{Par2}}\label{lem:3.4}
Let $G$ be a discrete subgroup of $\textrm{PU}(2,1)$ that contains the Heisenberg translation $g$ by $(\xi,\nu).$ Let $h$ be any element of $G$ not fixing $\infty$ and with isometric sphere of radius $r_{h}$. Then $$r_{h}^2\leq t_{g}(h^{-1}(\infty))t_{g}(h(\infty))+4|\xi|^2.$$
\end{lem}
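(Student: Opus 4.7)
The plan is to adapt the classical Shimizu--Leutbecher argument to the Siegel domain model, working with isometric spheres measured in the Cygan metric. I would argue by contradiction: assume $G$ is discrete but $r_h^2 > t_g(h^{-1}(q_\infty))\, t_g(h(q_\infty)) + 4|\xi|^2$, and extract from this a sequence of elements of $G$ accumulating at the identity.

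First I would normalize so that $g$ fixes $q_\infty$ and lifts to an explicit matrix in $\mathrm{SU}(2,1)$ whose entries encode $(\xi,\nu)$. Writing $h = (a_{ij})$, the denominator $|a_{22}-a_{23}+a_{32}-a_{33}|$ in the radius formula and the centers $c_h = h^{-1}(q_\infty)$, $c_{h^{-1}} = h(q_\infty)$ of the two isometric spheres $S_h, S_{h^{-1}}$ can be read off directly from the matrix entries. The map $h$ interchanges $S_h$ and $S_{h^{-1}}$ and acts as a Cygan inversion through each, so a lower bound on $r_h$ translates into a lower bound on the Cygan displacement induced by $h$ near these centers.

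The core of the proof is iteration. Consider $\psi_n = g^n h g^{-n} h^{-1} \in G$ for $n \in \Z$. A direct matrix computation shows (after disposing of the degenerate case where $h$ commutes with some $g^n$) that each $\psi_n$ fails to fix $q_\infty$, and produces formulas for the center and radius of its isometric sphere in terms of $r_h$, $|\xi|$, $\nu$, and $c_h, c_{h^{-1}}$. The $4|\xi|^2$ correction in the target inequality is precisely the contribution from the non-commutative imaginary cross-term $2\Im(\xi_1\overline{\xi_2})$ in the Heisenberg product $\lozenge$, picked up when one transports $c_h$ by $g^n$ inside $\mathfrak N$.

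If the stated inequality is violated, the radii $r_{\psi_n}$ remain bounded away from zero while the centers are confined to a bounded Cygan region; hence infinitely many of the isometric spheres of the $\psi_n$ must pairwise overlap. A standard Shimizu-type argument then produces, from suitable products $\psi_{n_j}\psi_{n_k}^{-1}$, elements of $G$ with arbitrarily small Cygan translation, contradicting discreteness. The main obstacle is the third step: the Heisenberg group is non-abelian, so iterated conjugation by $g$ introduces both a horizontal shift of size $n|\xi|$ and a vertical twist governed by $n\,\Im(\bar\xi \cdot c_h)$, and extracting the sharp constant $4|\xi|^2$ rather than a weaker bound requires tracking this twist with precision inside the Cygan radius formula.
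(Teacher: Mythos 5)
This lemma is quoted verbatim from \cite[Theorem 2.1]{Par2}; the paper under review gives no proof of it, so there is no internal argument to compare against. Judged on its own merits, your sketch does not follow Parker's actual proof and contains a step that fails. Parker's argument is the classical Shimizu--Leutbecher iteration transplanted to the Cygan metric: one sets $h_0=h$ and $h_{n+1}=h_n g h_n^{-1}$, derives recursive inequalities for the radii $r_{h_n}$ and the translation lengths $t_g(h_n^{\pm1}(\infty))$, and shows that if the stated inequality were violated these quantities would force $h_n\to g$ with $h_n\neq g$ for all $n$, contradicting discreteness. The correction term $4|\xi|^2$ emerges from estimating $t_g$ at the transported centers in that recursion, not from a packing argument.

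The concrete gap in your version is the fourth step. For a product one has, up to the usual Cygan-metric identity, $r_{AB}=r_A r_B/\rho_0(A^{-1}(\infty),B(\infty))$, so for $\psi_n=(g^n h g^{-n})h^{-1}$ the radius is comparable to $r_h^2/\rho_0\bigl(g^n h^{-1}(\infty),\,h^{-1}(\infty)\bigr)$. Whenever $g$ is a nontrivial Heisenberg translation this denominator tends to infinity with $n$ (like $n$ if $\xi\neq0$, like $\sqrt{n}$ if $\xi=0$), so the radii $r_{\psi_n}$ tend to $0$ rather than staying bounded away from zero, and the centers $\psi_n^{-1}(\infty)$ are not confined to a bounded Cygan region either. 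The proposed ``infinitely many overlapping isometric spheres'' configuration therefore never arises, and the final appeal to a ``standard Shimizu-type argument'' extracting small elements from overlaps is not a standard step and is not supplied. Since you also defer the derivation of the sharp constant $4|\xi|^2$ to an unexecuted computation, the proposal as written does not establish the lemma; the remedy is to switch to the iteration $h_{n+1}=h_n g h_n^{-1}$ and carry out Parker's recursive estimates.
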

\par In the sequel we give two necessary conditions for $(m,n,\infty)-$ triangle groups to be discrete using the previous two lemmas.
\begin{thm}\label{thm3.2}
The complex hyperbolic triangle group $\Gamma$ of type $(m,n,\infty)$ is not discrete if $m,n,\theta$ satisfy one of the two following conditions
\\
\\(1) $7\leq n<\infty$ and
\begin{equation}\label{jor}
\Big|\cos^2\Big(\frac{\pi}{n}\Big)+2\cos^2\Big(\frac{\pi}{m}\Big)-4\cos\Big(\frac{\pi}{n}\Big)\cos\Big(\frac{\pi}{m}\Big)\cos \theta+1\Big|<\frac{1}{2}\sin\Big(\frac{\pi}{n}\Big);
\end{equation}
\\(2)(Suppose that $u=\cos^2(\frac{\pi}{m})+\cos^2(\frac{\pi}{n})-2\cos(\frac{\pi}{m})\cos(\frac{\pi}{n})\cos \theta$,
$v=\cos(\frac{\pi}{m})\cos(\frac{\pi}{n})\sin \theta.$)
\begin{equation}\label{shimizu}
|u-2iv|+4u<\frac{1}{4}.
\end{equation}
\end{thm}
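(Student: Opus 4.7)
The plan is to apply the two discreteness criteria stated in Lemmas \ref{lem:3.3} and \ref{lem:3.4} — a complex-hyperbolic J\o rgensen-type inequality and Shimizu's lemma — to carefully chosen pairs of elements of $\Gamma$. Part (1) uses the former with $A=I_1I_2$, $B=I_3$, and part (2) uses the latter with the parabolic $g=I_2I_3$ and the involution $h=I_1$; in each case the stated inequality will turn out to be precisely the failure of the lemma's hypothesis on these generators.

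For (1), the product $A=I_1I_2$ is regular elliptic of order $n$ because $\partial C_1$ meets $\partial C_2$ at angle $\pi/n$; its fixed point is the vertex $\tilde u_3=(\cos(\pi/n),0,1)^t$, which lies in $\HC$ since $\langle\tilde u_3,\tilde u_3\rangle=\cos^2(\pi/n)-1<0$. A direct trace computation from the explicit matrices gives $\mathrm{tr}(A)=4\cos^2(\pi/n)-1\in\mathbb{R}$, so $A$ preserves a Lagrangian plane, matching the hypothesis of the lemma. The element $B=I_3$ satisfies $B(u_3)\ne u_3$ whenever $\cos(\pi/n)\ne\cos(\pi/m)e^{i\theta}$. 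Using the reflection identity $I_3z=-z+2\langle z,p_3\rangle p_3/\langle p_3,p_3\rangle$ together with $\langle p_3,p_3\rangle=1$ and $\langle\tilde u_3,p_3\rangle=\cos(\pi/n)-\cos(\pi/m)e^{i\theta}$, one obtains
\[
\langle I_3\tilde u_3,\tilde u_3\rangle=\cos^2(\pi/n)+2\cos^2(\pi/m)-4\cos(\pi/n)\cos(\pi/m)\cos\theta+1,
\]
after which the Bergman metric formula together with $\sin^2(\pi/n)=1-\cos^2(\pi/n)$ reduces the condition $\cosh(\rho(Bu_3,u_3)/2)\sin(\pi/n)<1/2$ exactly to (\ref{jor}).

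For (2), note that $\tilde u_1=(0,1,-1)^t$ and $\psi(q_\infty)=[0,-1,1]^t$ are projectively the same point of $\mathbb{CP}^2$, so the parabolic $g=I_2I_3$ already fixes $q_\infty$ in the given projective model and no preliminary conjugation is required. Its trace equals $3$, so $g$ is a pure (unipotent) Heisenberg translation. Carrying out the $3\times 3$ product $I_2I_3$ and comparing its $(1,2)$ and $(2,2)$ entries with the canonical Heisenberg-translation matrix yields translation parameters $(\xi,\nu)$ with $|\xi|^2=4u$ and $\nu=8v$, where $u,v$ are as in the statement. The involution $h=I_1=\mathrm{diag}(-1,1,-1)$ sends $q_\infty$ to the Heisenberg origin, so $h^{-1}(\infty)=h(\infty)=(0,0)$, and the formula for the isometric sphere radius immediately gives $r_h=1$. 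From the Cygan-metric formula,
\[
t_g(0,0)=\rho_0\bigl(g(0,0),(0,0)\bigr)=\big||\xi|^2-i\nu\big|^{1/2}=2|u-2iv|^{1/2},
\]
so the inequality in Lemma \ref{lem:3.4} reads $1\le 4|u-2iv|+16u$, whose failure is exactly (\ref{shimizu}).

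The main obstacle is the bookkeeping in part (2): one must first verify that $u_1=q_\infty$ in the given projective model so that no conjugation intervenes, and then carry out the matrix product $I_2I_3$ carefully enough to read off the Heisenberg translation parameters in terms of the quantities $u$ and $v$ appearing in the statement. Once this matching is in place, the translation-length and isometric-sphere computations become direct substitutions, and both (\ref{jor}) and (\ref{shimizu}) drop out after routine algebraic simplification.
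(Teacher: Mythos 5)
Your proposal is correct and follows essentially the same route as the paper: part (1) applies the J\o rgensen-type inequality to $A=I_1I_2$, $B=I_3$ at the fixed point $\tilde u_3=(\cos(\pi/n),0,1)^t$, and part (2) applies Shimizu's lemma to the Heisenberg translation $g=I_2I_3$ and $h=I_1$ with $r_h=1$ and $h^{\pm1}(\infty)$ at the Heisenberg origin. Your explicit identification $|\xi|^2=4u$, $\nu=8v$ is consistent with the theorem's definition of $u$ (the paper's intermediate display has a sign slip there), so nothing further is needed.
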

\begin{proof}
(1) Let $A=I_{1}I_{2},$ $B=I_{3}$ and $z=z_{0}$ (fixed point of $I_{12})$, where
\begin{displaymath}
{z_{0}} =
\left[ \begin{array}{ccc}
s_{1}\\
0\\
1
\end{array} \right].
\end{displaymath}
By computation, we know
\begin{displaymath}
{B(z_{0})} =
\left[ \begin{array}{ccc}
s_{1}-2s_{2}e^{i\theta}\\
-2s_{1}s_{2}e^{-i\theta}+2s_{2}^2\\
2s_{1}s_{2}e^{-i\theta}-2s_{2}^2-1
\end{array} \right].
\end{displaymath}

It is easy to see $s_{2}\neq s_{1}e^{-i\theta}$ which means $B$ does not fix $z_{0}$, otherwise $C_{1}, C_{2}$ will coincide.
\par Using $J$\o$rgensen's$ $inequality $ stated previously (Lemma \ref{lem:3.3}), we deduce if
$$\Big|\frac{\langle B(z_{0}), z_{0}\rangle}{\langle z_{0},z_{0}\rangle}\Big| \sin\Big(\frac{\pi}{n}\Big)<\frac{1}{2},$$
then $\Gamma$ is not discrete. Simplifying the inequality above, we will obtain \eqref{jor} stated in the theorem.
\\(2) Let $g=I_{2}I_{3}$ and $h=I_{1}$. We see that $g$ is a Heisenberg translation in the form $\Big(2\big(-e^{i\theta}\cos(\frac{\pi}{m})+\cos(\frac{\pi}{n})\big), 8\sin \theta \cos(\frac{\pi}{m}) \cos(\frac{\pi}{n})\Big)$ fixing $\infty$.
It is clear that $h$ does not fix $\infty$ and has the isometric sphere with radius 1. By computing
\begin{displaymath}
{h^{-1}(\infty)} =
\left[ \begin{array}{ccc}
0\\
-1\\
-1
\end{array} \right],\quad
{h(\infty)} =
\left[ \begin{array}{ccc}
0\\
-1\\
-1
\end{array} \right],
\end{displaymath}
we know
\begin{align*}
&  \quad t_{g}(h(\infty))t_{g}(h^{-1}(\infty))\\
& =\rho_{0}^2(g(h(\infty)),h(\infty)) \\
& =\Big|4\Big(\cos^2\Big(\frac{\pi}{m}\Big)+\cos^2\Big(\frac{\pi}{n}\Big)+2\cos\Big(\frac{\pi}{m}\Big)\cos\Big(\frac{\pi}{n}\Big) \cos \theta\Big)
-8i\sin \theta \cos\Big(\frac{\pi}{m}\Big) \cos\Big(\frac{\pi}{n}\Big)\Big|.
\end{align*}
Then the inequality \eqref{shimizu} can be obtained by applying \emph{Shimizu's lemma for complex hyperbolic space} (Lemma \ref{lem:3.4}).
\end{proof}
Following the preceding example listed in Table 1, we show the different intervals of $a$ such that $\Gamma$ to be non-discrete when $m=8.$ We will see the corresponding solutions $a\in(c_{n},1)$ by condition (1) and $a\in(d_{n},1)$ by condition (2). However there are no solutions for $a$ when $n\leq6$ or $n\geq 130$ by condition (1) and also no solutions for $a$ when $n\leq3$ by condition (2).
\begin{center}
\begin{table}[h]
\caption{Approximate values of $c_{n}$, $d_{n}.$}
\begin{tabular}{|c|c|c|}
\hline
n&$c_{n}$&$d_{n}$\\
\hline
4&---&0.99961\\
5&---&0.99419\\
6&---&0.99289\\
7&0.99170&0.99279\\
8&0.98685&0.99299\\
9&0.98459&0.99323\\
10&0.98363&0.99346\\
20&0.98750&0.99442\\
30&0.99147&0.99464\\
100&0.99911&0.99480\\
200&---&0.99481\\
\hline
\end{tabular}
\end{table}
\end{center}
\begin{rem}
Non-discrete complex hyperbolic triangle groups of type $(n,n,\infty)$ has been investigated by some authors, one can refer to \cite{Kam3},\cite{KPT1},\cite{KPT2}. Table 1 and Table 2 are extension of their results for different type of complex hyperbolic triangle groups.
\end{rem}
\section{Complex hyperbolic triangle groups of type $(n,\infty,\infty)$}
\par In this section, the aim is to consider the non-discrete classes of $\Gamma$ of type $(n,\infty,\infty)$. For convenience, we rewrite the three normalised polar vectors of $\Gamma$
\begin{displaymath}
{p_{1}} =
\left[ \begin{array}{ccc}
0\\
1 \\
0
\end{array} \right],\quad
{p_{2}} =
\left[ \begin{array}{ccc}
1 \\
-1 \\
1
\end{array} \right],\quad
{p_{3}} =
\left[ \begin{array}{ccc}
1\\
-s e^{-i\theta}\\
s e^{-i\theta}
\end{array} \right],
\end{displaymath}
where $s=\textmd{cos}(\pi/n).$ Then the matrix representation of the three corresponding complex reflections can easily be obtained. In what follows we still assume that $a=\cos \theta$.  A simple computation yields $\tau =\textmd{tr}(I_{1}I_{2}I_{3})= -7+8 e^{i\theta} \cos(\pi/n)-2 \cos(2\pi/n)$ and the discriminant function (\ref{dis})
\\\begin{align*}
f(\tau)=2048 -
& 10240 a s +1792 s^2 + 21760 a^2 s^2 - 16384 a s^3 -16384 a^3 s^3 + 7680 s^4\\
& + 22528 a^2 s^4 - 18944 a s^5+ 3840 s^6 +4096 a^2 s^6 - 2048 a s^7 + 256 s^8.\\
\end{align*}
\par For different n, the interval of $a$ and the value of the parameter angular invariant $\theta$ such that $\Gamma$ to be non-discrete can be solved by Theorem \ref{thm3.1}. We observe that for $n\geq8$, there are solutions ($\alpha_{n},\beta_{n}$) for $a$, which lie between 0 and 1. But otherwise there are no solutions. Later we tabulate this interval of $\cos \theta$ with other two intervals which are defined after Corollary \ref{cro5.1}.
\begin{cor}\label{cro5.1}
If $\Gamma$ of type $(n,\infty,\infty)$ satisfies
\\(1) $7\leq n<\infty$ and $\Big|\cos^2(\frac{\pi}{n})-4 \cos(\frac{\pi}{n})\cos \theta+3\Big|<\frac{1}{2} \sin(\frac{\pi}{n}),$ or
\\(2) $|u-2iv|+4u<\frac{1}{4},$
where $u=\cos^2(\frac{\pi}{n})-2 \cos(\frac{\pi}{n})\cos \theta+1$,
$v=\cos(\frac{\pi}{n})\sin \theta$.
\\Then $\Gamma$ will be non-discrete.
\end{cor}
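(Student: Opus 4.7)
The plan is to reproduce the proof of Theorem~\ref{thm3.2} using the normalized polar vectors of Section~4, adjusting the choice of involutions to the fact that, in the $(n,\infty,\infty)$ configuration, the finite angle $\pi/n$ now occurs between $C_{1}$ and $C_{3}$ rather than between $C_{1}$ and $C_{2}$. Each part of the corollary will follow from one of the two lemmas used in Theorem~\ref{thm3.2}.

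For part~(1), I plan to apply J\o rgensen's inequality (Lemma~\ref{lem:3.3}) with $A = I_{1}I_{3}$ and $B = I_{2}$. A short trace computation gives $\mathrm{tr}(I_{1}I_{3}) = 4\cos^{2}(\pi/n) - 1 \in \mathbb{R}$, so $A$ is regular elliptic of order $n$ and preserves a Lagrangian plane. Its unique fixed point in $\HC$ is the projectivization of $\{\tilde z : \langle \tilde z, p_{1}\rangle = \langle \tilde z, p_{3}\rangle = 0\}$, namely $\tilde z = (se^{i\theta}, 0, 1)^{t}$ with $s = \cos(\pi/n)$, for which $\langle\tilde z,\tilde z\rangle = s^{2}-1 = -\sin^{2}(\pi/n)$. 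A direct evaluation using $p_{2} = (1,-1,1)^{t}$ yields $\langle I_{2}\tilde z,\tilde z\rangle = s^{2}-4s\cos\theta+3$. Substituting these into $\cosh(\rho(Bz,z)/2)\sin(\pi/n) < 1/2$ and clearing denominators via the identity $1-s^{2} = \sin^{2}(\pi/n)$ produces precisely the inequality in~(1).

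For part~(2), I would apply Shimizu's lemma (Lemma~\ref{lem:3.4}) with $g = I_{2}I_{3}$ and $h = I_{1}$. Since $C_{2}$ and $C_{3}$ are both vertical chains, $g$ fixes $q_{\infty}$ and acts as a Heisenberg translation; its parameters can be read off either by direct matrix multiplication or by specializing those in the proof of Theorem~\ref{thm3.2} via $\cos(\pi/n_{\mathrm{old}}) = 1$, giving $\xi_{0} = 2(1-se^{i\theta})$ and $\nu_{0} = 8s\sin\theta$, so that $|\xi_{0}|^{2} = 4u$ and $\nu_{0} = 8v$ for $u,v$ as in the statement. The isometric sphere of $h = I_{1}$ has radius $1$, and $h(\infty) = h^{-1}(\infty)$ corresponds to the Heisenberg origin, hence $\rho_{0}^{2}(g(h\infty),h\infty) = \bigl||\xi_{0}|^{2} - i\nu_{0}\bigr| = 4|u-2iv|$. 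Inserting these into $r_{h}^{2} \le t_{g}(h^{-1}\infty)\, t_{g}(h\infty) + 4|\xi|^{2}$ gives $1 \le 4|u-2iv| + 16u$, whose negation is condition~(2). No step is especially difficult; the only conceptual point is the choice $A = I_{1}I_{3}$ (rather than $A = I_{1}I_{2}$) in the J\o rgensen step, which is forced by the new position of the finite angle.
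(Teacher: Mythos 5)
Your proposal is correct and follows essentially the same route as the paper, which simply obtains the corollary by setting $m=\infty$ in Theorem \ref{thm3.2}; you carry out that specialization explicitly in the Section 4 normalization, correctly switching to $A=I_{1}I_{3}$ since the finite angle there sits between $C_{1}$ and $C_{3}$. All the computed quantities ($\mathrm{tr}(I_1I_3)=4\cos^2(\pi/n)-1$, $\langle I_2\tilde z,\tilde z\rangle=s^2-4s\cos\theta+3$, $|\xi_0|^2=4u$, $\nu_0=8v$) check out and reproduce conditions (1) and (2).
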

\par The proof of this theorem is obvious by letting $m$ to be $\infty$ if we see Theorem \ref{thm3.2}. Thence from the assumption $a=\cos\theta$ and $s=\cos(\pi/n)$, we know if
$$35 - 96 a s + 25 s^2 + 64 a^2 s^2 - 32 a s^3 + 4 s^4<0,\quad \textmd{or}$$
$$\sqrt{1 - 4 a s + 6 s^2 - 4 a s^3 + s^4}<\frac{-15 + 32 a s - 16 s^2}{4},$$
then there are intervals $(\gamma_{n},1)$, $(\eta_{n},1)$ of $a$ for $\Gamma$ to be non-discrete. The following Table 3 shows the intervals of $\cos$ $\theta$.

\par Let $\Gamma$ be a complex hyperbolic triangle group of type $(n,\infty,\infty;k),$ where $k$ is the order of $I_3I_1I_3I_2$. By simple computation, we have
$$\textmd{tr}(I_3I_1I_3I_2)=3+16s^2-16sa.$$
Denote $\textmd{tr}(I_3I_1I_3I_2)$ by $t$, then $f(t)=16384(a-s)^3s^3(-1+4(a-s)s).$ Therefore $I_3I_1I_3I_2$ will be an regular elliptic when $a\in\big(s,\frac{1+4s^2}{4s}\big)$. Especially $a=s$ leads $I_3I_1I_3I_2$ to be unipotent parabolic, while if $a=\frac{1+4s^2}{4s}$ then $I_3I_1I_3I_2$ will be a boundary elliptic. In the following we will give two related examples.

\begin{eg}
Discreteness of $\Gamma_{n}$ of type $(n,\infty,\infty)$ whose angular invariant $\alpha=\frac{\pi}{n}\textmd{ (i.e. } a=s ).$
\end{eg}
\par By computing $\tau=\textmd{tr}(I_{1}I_{2}I_{3})=-3 + 2 \cos\big(2 \pi/n\big) + 4 i \sin\big(2 \pi/n\big),$ we have
$$f(\tau)=128\big(7-9\cos(2\pi/n)\big)\big(\sin(\pi/n)\big)^6.$$
 $I_{1}I_{2}I_{3}$ will be a regular elliptic element when $n\geq10$ which leads $\Gamma_{n}$ to be non-discrete.
Additionally the inequality stated in Corollary \ref{cro5.1} (1) is equivalent to
$$\textmd{sin}\Big(\frac{\pi}{n}\Big)<\frac{1}{6},$$
i.e. $n\geq19$. Meanwhile the condition (2) yields
$$\sqrt{16+32a^2-48a^4}<-15+16a^2,$$
i.e. $n\geq 61.$ Therefore $\Gamma_{n}$ will be non-discrete when $n\geq10.$
\par Specifically, when $n=4$, See \text{\cite[Theorem 2.1]{KPT1}}, we will see
\begin{displaymath}
{I_{1}} =
\left[ \begin{array}{ccc}
-1 &\quad 0 \quad& 0 \\
0 & 1 & 0 \\
0 & 0 & -1
\end{array} \right],\quad
{I_2} =
\left[ \begin{array}{ccc}
1 &\quad -2 \quad& -2 \\
-2 & 1 & 2 \\
2 & -2 & -3
\end{array} \right],
\end{displaymath}
\begin{displaymath}
{I_3} =
\left[ \begin{array}{ccc}
1 &\quad -1-i  \quad&  -1-i \\
-1+i & 0 & 1 \\
1-i & -1 & -2
\end{array} \right].
\end{displaymath}
Obviously all of the matrix entries are in $\mathbb{Z}[i]$ which is a discrete subring of $\mathbb{C}.$ Therefore $\Gamma_{4}$ is discrete. Here $I_{1}I_{2}I_{3}$ is a loxodromic element.

\begin{center}
\begin{table}[htb]
\caption{Approximate values of $\alpha_{n},$ $\beta_{n},$ $\gamma_{n}$, $\eta_{n}.$}
\begin{tabular}{|c|c|c|c|c|}
\hline
n&$\alpha_{n}$&$\beta_{n}$&$\gamma_{n}$&$\eta_{n}$\\
\hline
4&---&---&---&0.99959\\
5&---&---&---&0.99857\\
6&---&---&---&0.99624\\
7 & ---& --- & 0.99748&0.99524\\
8 & 0.93724&0.93784&0.99099&0.99482\\
9&0.94201&0.94794&0.98756&0.99463\\
10&0.94476&0.95631&0.98575&0.99454\\
15&0.94993&0.97914&0.98472&0.99451\\
20&0.95142&0.98799&0.98647&0.99455\\
40&0.95272&0.99694&0.99171&0.99461\\
100&0.95306&0.99951&0.99632&0.99463\\
200&0.95311&0.99988&0.99809&0.99464\\
\hline
\end{tabular}
\end{table}
\end{center}

\begin{eg}
$(7,\infty,\infty;5)$ is non-discrete.
\end{eg}
\par From the assumption about the trace of $I_{3132},$ we can deduce $3+16s^2-16sa=1+2 \cos(2\pi/k)$, i.e.
$$\cos(\theta)=\frac{8s^2-\cos(2\pi/k)+1}{8s},$$
where $s=\cos(\pi/7).$ It follows from Table 3 that $\Gamma$ is non-discrete when $0.28621\leq\cos(2\pi/k)\leq0.32052.$ Then it is easy to see that $\Gamma$ of type $(7,\infty,\infty;5)$ is non-discrete.
\\
\\\textbf{Acknowledgement} I am grateful to Professor Toshiyuki Sugawa for his patient guidance and valuable suggestions. I would like to thank Professor John Parker for his comments to this paper and consistent help. I also thank Professor Xiantao Wang for assistance with the earlier version of the manuscript.

\end{document}